\newtheorem*{theorema}{Proposition 1}
\newtheorem*{theoremb}{Proposition 2}
\newcommand{\R}{{\mathbb{R}}}
\newcommand{\taf}{{\hskip 5pt} $\blacksquare$
                  \renewcommand{\qedsymbol}{}}
\begin{document}
\title{A characterization of the Lorentz Space\\ $L(p,r)$ in terms of Orlicz type classes}
\author{Calixto P. Calder\'on and Alberto Torchinsky}
\date{}
\maketitle

{\em In remembrance of N. M. Rivi\`ere (1940 -- 1978), who believed in \\ Lorentz Spaces.} 

\begin{abstract}  We describe the Lorentz space $L(p,r)$, $0<r<p$, $p>1$, in terms of Orlicz type classes of functions $L_{\Psi}$. As a consequence of this result it follows that Stein's characterization of the real functions on $\R^n$ that are differentiable at almost all the points in $\R^n$ \cite{Stein}, is equivalent to the earlier characterization of those functions given by A. P. Calder\'on \cite{APC}.  
\end{abstract}

\section*{Introduction}
 
In 1981 E. M. Stein  proved that if  the gradient $\nabla F$ in the distribution sense of  a real function  $F$ on $\R^n$  belongs to the Lorentz space $L(n,1)$, then $F$ is differentiable at almost all the points in $\R^n$, $n>1$. He further proved that no condition on $\nabla F$ weaker than $\| \nabla F\|_{n,1}^*<\infty$ will guarantee the differentiability  of $F$ a.e.\! in $\R^n$,  \cite{Stein}. E. M. Stein refers to ``local" $L(n,1)$, nevertheless the connection between ``local" and ``global" should be clear to the reader in this context.

Earlier, in 1951, A. P. Calder\'on had proved that if $\nabla  F$ belongs to the Orlicz class
\begin{equation}
L_{\Psi}=\Big\{f: \int_B \Psi\big(|f(x)|\big) \,dx<\infty\Big\},
\end{equation}
$B$ a ball in $\R^n$ and $\Psi$ satisfying 
\begin{equation}
\int_1^\infty \big(t/\Psi(t)\big)^{1/(n-1)}\,dt<\infty,
\end{equation}
$\Psi(t)$ nonnegative, nondecreasing, then $F$ is differentiable   
at almost all the points of $B$. A. P. Calder\'on further showed that no condition on $\nabla F$ weaker than (1), (2) above guarantees the a.e.\!\! differentiablity on $B$ \cite{APC}.
Since Calder\'on's proof suggests that convexity may not be necessary for $\Psi$, we will not require it in what follows. We will refer to those classes of functions   as Orlicz type classes.

The aim of this paper is to establish the connection between the Lorentz space $L(p,r)$,  $0<r<p$, $p>1$, and Orlicz type classes that satisfy a condition akin to (2) above. The case $p=n$, $r=1$, is of particular interest as it    implies that the  differentiability conditions  discussed above are equivalent \cite{CPC0}. 

In a related context, since both  Lorentz and Orlicz spaces, as well as the hybrid Lorentz -- Orlicz spaces, arise as intermediate spaces of   $L^p$ spaces [9], it is also of interest to describe their interconnections.  Now, if $I=[0,1]$ denotes the unit interval in $\R$,  $L^p(I)$ cannot be expressed as the union of the $L^q(I)$ spaces  it contains properly; 
$f(x)= |x|^{-1/p} \, \ln^{-2/p}(1/|x)|)\chi_{I}\in L^p(I)$ and $f\notin L^{q}(I)$ for $p<q\le\infty $. On the other hand, 
Welland showed that $L^p(I)$, as well as more general  Orlicz spaces on $I$, can be represented as the union of Orlicz spaces that they contain properly \cite{RW}.   
Since the Lorentz spaces are monotone with respect to the second index \cite{RH}, and since
 $L(p,r)(I)\subset L(p,p)(I)=L^p(I)$ in the range that is of interest to us, Welland's result gives that $L(p,r)(I)$  can be described as the union of Orlicz spaces that it contains, but this is insufficient to us. Our result covers  $L(p,r)(\R^n)$  and, more to the point,  the Orlicz type classes $L_\Psi$   are taken over the family of  functions $\Psi$ that satisfy  condition (3) below.  
\vskip 3pt

We will   work with classes of measurable functions $f$ defined on $\R^n$. Let  $f^*$ denote the nonincreasing rearrangement of $|f|$, and let $L(p,r)$,  \cite{RH,NMR},  denote the Lorentz space of measurable functions $f$ whose nonincreasing rearrangement $f^*$ satisfies
\begin{equation*}
\int_0^\infty f^*(t)^r\, t^{r/p-1}\, dt<\infty.
\end{equation*}
We will restrict ourselves to  the range $0<r<p$, $p>1$.

Also consider the Orlicz type class $L_\Psi$ of measurable functions $f$ defined on $\R^n$, such that the rearrangement $f^*$  of $|f|$ satisfies
\begin{equation*}
\int_0^\infty \Psi\big( f^*(t)\big)\,dt<\infty,
\end{equation*}
for a nondecreasing $\Psi\ge 0$ defined on $(0,\infty)$ and satisfying
\begin{equation}
\int_0^\infty \frac{t^{q-1}}{\Psi(t)^{q/p}}\,dt<\infty,
\end{equation}
where $0<r<p$, $p>1$, and $1/p+1/q=1/r$.

The aim of this paper is to prove that for $0<r<p$, $p>1$, 
\begin{equation}
L(p,r)=\bigcup_\Psi L_\Psi,\quad 1/p+1/q=1/r.
\end{equation}
Or, in other words,  $f\in L(p,r)$ if and only if 
\begin{equation*}
\int_0^\infty \Psi(f^*(t) )\,dt<\infty
\end{equation*}
for some $\Psi$  that satisfies (3) above.

The proof is accomplished in two parts, each dealing with an inclusion in (4). We only point out that the  constants $c$ that appear below may vary from appearance to appearance, and are independent of $f$.

\section*{Embedding of Orlicz  type classes into Lorentz Spaces}

We begin by showing  that the Orlicz  type classes corresponding to functions $\Psi$ that satisfy (3) above are continuously included in an appropriate Lorentz space. More precisely, we have 
\begin{theorema}
Let $f$ be a nonnegative, nonincreasing function  defined on $(0,\infty)$ such that 
\begin{equation*}
\int_0^\infty \Psi\big(f(t)\big)\,dt<\infty, 
\end{equation*}
where  $\Psi(t)$ satisfies (3) above. 

Then, we have 
\begin{equation}
\int_0^\infty f(t)^r \, t^{r/p-1}\,dt
\le c\Big(\int_0^\infty \frac{t^{q-1}}{\Psi(t)^{q/p}}\,dt\Big)^{r/q}\Big(\int_0^\infty 
\Psi\big(f(t)\big)\,dt\Big)^{r/p}.
\end{equation}
\end{theorema}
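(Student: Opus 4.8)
The plan is to pass to the distribution function and then run a single Hölder estimate on a partition of the range of $f$ that is dyadic \emph{in the values of $\Psi$}. Writing $\lambda(s)=|\{t>0: f(t)>s\}|$ for the (nonincreasing) distribution function of $f$, a layer-cake computation gives
\[
\int_0^\infty f(t)^r\,t^{r/p-1}\,dt = p\int_0^\infty s^{r-1}\lambda(s)^{r/p}\,ds .
\]
Set $A=\int_0^\infty s^{q-1}\Psi(s)^{-q/p}\,ds$ (the integral in (3)) and $B=\int_0^\infty\Psi(f(t))\,dt$; since $r/p+r/q=1$, the exponents $p/r$ and $q/r$ are conjugate, and the whole estimate is an instance of Hölder's inequality once the two factors are arranged to reproduce $B$ and $A$. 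The basic quantitative input comes from monotonicity: for every $s$ one has $\lambda(s)\Psi(s)\le B$, since $\Psi(f(t))\ge\Psi(s)$ on $\{f>s\}=(0,\lambda(s))$.

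I would then choose levels $s_j=\inf\{s:\Psi(s)\ge 2^{\,j}\}$ (a generalized inverse of $\Psi$), so that $2^{\,j}\le\Psi(s)\le 2^{\,j+1}$ on each block $[s_j,s_{j+1})$; this two-sided control is exactly what replaces any convexity, doubling, or continuity hypothesis on $\Psi$. Using $\lambda\le\lambda_j:=\lambda(s_j)$ on $[s_j,s_{j+1})$ gives
\[
\int_0^\infty f(t)^r\,t^{r/p-1}\,dt \le c\sum_j \lambda_j^{\,r/p}\,(s_{j+1}^r-s_j^r),
\]
while summation by parts applied to $B\ge\sum_j 2^{\,j}\,|\{s_j<f\le s_{j+1}\}|=\sum_j 2^{\,j}(\lambda_j-\lambda_{j+1})$ yields the budget $\sum_j 2^{\,j}\lambda_j\le cB$. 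The lowest block, which absorbs the range $0<\Psi\le\Psi(0^+)$ when $\Psi(0^+)>0$, is treated the same way, using $\lambda(0^+)\Psi(0^+)\le B$ together with the corresponding initial piece of $A$.

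Next I would apply the discrete Hölder inequality with exponents $p/r$ and $q/r$ to the sum, writing $\lambda_j^{\,r/p}(s_{j+1}^r-s_j^r)=(\lambda_j 2^{\,j})^{r/p}\cdot 2^{-jr/p}(s_{j+1}^r-s_j^r)$; the first factor sums to at most $(cB)^{r/p}$, so it remains to bound $\sum_j 2^{-jq/p}(s_{j+1}^r-s_j^r)^{q/r}$ by $cA$. This reduces to two routine facts. First, the scale-free inequality $(y^r-x^r)^{q/r}\le c\,(y^q-x^q)$ for $0\le x\le y$, valid because $(1-t^r)^{q/r}/(1-t^q)$ is continuous on $[0,1)$ and tends to $0$ as $t\to1^-$ (here $q/r>1$). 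Second, $\sum_j 2^{-jq/p}(s_{j+1}^q-s_j^q)\le cA$, which follows from $\Psi(s)\le 2^{\,j+1}$ on $[s_j,s_{j+1})$ upon comparing each summand with $\int_{s_j}^{s_{j+1}}s^{q-1}\Psi(s)^{-q/p}\,ds$. Combining the two factors yields the desired bound $\int_0^\infty f^r t^{r/p-1}\,dt\le cA^{r/q}B^{r/p}$, with $c$ depending only on $p,q,r$.

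The step I expect to be the real obstacle is precisely the \emph{choice of decomposition}. A single global Hölder estimate (splitting $f^r t^{r/p-1}$ so that one factor is $\Psi(f)^{r/p}$) is always valid but hopelessly lossy: the complementary factor becomes $\int_0^\infty f^q\Psi(f)^{-q/p}\,dt/t$, which diverges whenever $f$ is flat near $t=0$, while the mirror split produces $\int_0^\infty\lambda\Psi\,ds/s$, which diverges as soon as $\Psi(0^+)>0$ — even though the left-hand side is finite in both cases. Partitioning the range of $f$ at the points where $\Psi$ doubles cures this, since it forces $\Psi$ to be comparable to the constant $2^{\,j}$ on each block and thereby lets the monotonicity budget for $B$ and the size condition (3) for $A$ be read off block by block. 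The only genuine computation is then the elementary inequality $(y^r-x^r)^{q/r}\le c\,(y^q-x^q)$.
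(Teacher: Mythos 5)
Your argument is correct, but it is the ``dual'' of the paper's: you cut dyadically in the values of $\Psi$, the paper cuts dyadically in the values of $f$. The paper never passes to the distribution function. It partitions the domain of $f$ into the sets $I_k$ where $2^k<f\le 2^{k+1}$, uses $\int_{I_k}t^{r/p-1}\,dt\le (p/r)\,|I_k|^{r/p}$, inserts $\Psi(2^k)^{\pm r/p}$, and applies exactly your discrete H\"older inequality with exponents $(p/r,q/r)$. The two factors are then compared blockwise with $A$ and $B$ using only monotonicity of $\Psi$: for the $A$-factor, $2^{(k+1)q}\Psi(2^k)^{-q/p}\le c\int_{2^{k-1}}^{2^k}t^{q-1}\Psi(t)^{-q/p}\,dt$ since $\Psi(t)\le\Psi(2^k)$ on $[2^{k-1},2^k]$; for the $B$-factor, $|I_k|\,\Psi(2^k)\le\int_{I_k}\Psi(f(t))\,dt$ since $f>2^k$ on $I_k$. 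This makes the $B$-comparison immediate, so the paper needs no layer-cake identity, no Abel summation, and no power inequality $(y^r-x^r)^{q/r}\le c\,(y^q-x^q)$ (its role is played by the subadditivity $b^{r/p}-a^{r/p}\le (b-a)^{r/p}$). Consequently your diagnosis that the crux is a block decomposition with two-sided control is right, but the specific claim that one must cut where $\Psi$ doubles is not: cutting where $f$ doubles works just as well, and for the same reason --- every comparison uses only that $\Psi$ is nondecreasing, so neither scheme needs convexity, doubling, or continuity. What your route buys: the generalized inverse $s_j$ makes the comparability $\Psi\approx 2^j$ explicit, and working with $\lambda(s)$ keeps the whole argument on the value axis (so it applies verbatim without monotonicity of $f$, via $\int_0^\infty f(t)^rt^{r/p-1}\,dt\le p\int_0^\infty s^{r-1}\lambda(s)^{r/p}\,ds$). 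What the paper's route buys: a shorter proof with fewer moving parts, in which the degenerate-block and boundary issues you had to flag (jumps of $\Psi$, the case $\Psi(0^+)>0$, the vanishing of $\lambda_j$ needed to kill the boundary term in the summation by parts) never arise.
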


\begin{proof}
Let 
\begin{equation*} J= \int_0^\infty f(t)^r \, t^{r/p-1}\,dt,
\end{equation*}
and consider the interval  $I_k$   where $2^k<f\le 2^{k+1}$, ${-\infty<k<\infty}$. Clearly 
\begin{equation}
J \le \sum_{k} 2^{(k+1)r} \int_{I_k}t^{r/p-1}\,dt\le (p/r)\sum_k 2^{(k+1)r}\,|I_k|^{r/p}\,.
\end{equation}

Now, multiplying and dividing by $\Psi(2^k)^{r/p}$, by H\"older's inequality with conjugate 
 indices $(p/r, q/r)$, it readily follows that the sum in the right-hand side of (6)  is  dominated by
\begin{align}
\sum_k 2^{(k+1)r} \Psi(2^k)^{-r/p} &|I_k|^{r/p}\, \Psi(2^k)^{r/p}\nonumber\\
&\le  \Big(\sum_k \frac{2^{(k+1)q}}{\Psi(2^k)^{q/p}} \Big)^{r/q} \Big( \sum_k |I_k|\,\Psi(2^k)\Big)^{r/p}.
\end{align}

Consider the sum in the first factor in (7) above.  Each summand there  can be estimated by
\begin{equation*}
 \frac{2^{(k+1)q}}{\Psi(2^k)^{q/p}}\le c  \int_{2^{k-1}}^{2^{k}} \frac{t^{q-1}}{\Psi(t)^{q/p}}\,dt\,,
\end{equation*}
and, consequently, the sum does not exceed
\begin{equation}
c\,\sum_{k} \int_{2^{k-1}}^{2^{k}}\frac{t^{q-1}}{\Psi(t)^{q/p}}\,dt
=c \int_0^\infty \frac{t^{q-1}}{\Psi(t)^{q/p}}\,dt\,. 
\end{equation}

As for the second sum, since 
\[ |I_k|\,\Psi(2^k) \le \int_{I_k} \Psi\big(f(t)\big)\,dt\,,
\]
it readily follows that
 \begin{equation}
 \sum_k |I_k|\,\Psi(2^k) \le \int_0^\infty \Psi \big(f(t)\big)\,dt\,.
\end{equation}
Thus, combining (6), (7), (8), and (9) above, the estimate (5) holds, and the proof is finished.
\taf\end{proof}

\section*{Embedding of Lorentz Spaces into Orlicz  type classes} We complete the proof by showing that if $f\in L(p,r)$ for an appropriate range of values of $p,r$, then $f$ is in an  Orlicz type class $L_\Psi$, where $\Psi$ depends on $f$. 
More precisely, we have
\begin{theoremb}
Let $0<r<p$, $p>1$. Let $f$ be a 
 nonnegative, nonincreasing  function defined on $(0,\infty)$ such that 
 \begin{equation*}
 \int_0^\infty f(t)^r\, t^{r/p-1}\,dt<\infty\,. 
 \end{equation*} 
Then, with $1/p+1/q=1/r$, there exists  a nonnegative, nondecreasing  function $\Psi(t)$ defined on 
$(0,\infty)$  satisfying (3) above  
for which
\begin{equation} \int_0^\infty \Psi\big(f(t)\big)\,dt<\infty.
\end{equation}
\end{theoremb}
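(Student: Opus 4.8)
The plan is to construct $\Psi$ explicitly from the given function $f$, exploiting the fact that we need (3) to hold and (10) to hold simultaneously. Since $f$ is nonnegative and nonincreasing, the key quantity is the distribution/level structure of $f$, and I would dyadically decompose the range of $f$ exactly as in Proposition 1. Let me set $I_k$ to be the set where $2^k < f \le 2^{k+1}$, so that $|I_k|$ is finite for each $k$ (this uses $0<r<p$ together with the Lorentz finiteness hypothesis, which forces $f(t)\to 0$ as $t\to\infty$ and controls the growth near $0$). The hypothesis $\int_0^\infty f(t)^r t^{r/p-1}\,dt<\infty$ translates, via the same computation as in $(6)$, into the convergence of the series $\sum_k 2^{kr}\,|I_k|^{r/p}$. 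So the problem reduces to a purely discrete one: given a nonnegative sequence $a_k := 2^{kr}|I_k|^{r/p}$ with $\sum_k a_k<\infty$, find values $\Psi(2^k)$ making both discrete sums finite.

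\smallskip

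The second step is to recognize which two discrete sums must converge. Running the argument of Proposition 1 in reverse, condition (3) is (up to the constant-comparison in $(8)$) equivalent to $\sum_k 2^{kq}\,\Psi(2^k)^{-q/p}<\infty$, while condition (10) is equivalent (via $(9)$ and the matching lower bound $|I_k|\Psi(2^{k+1})\ge\int_{I_k}\Psi(f)\,dt$) to $\sum_k |I_k|\,\Psi(2^k)<\infty$. Writing $\psi_k:=\Psi(2^k)$, I must choose a nondecreasing-in-$k$ positive sequence $\psi_k$ with
\begin{equation*}
\sum_k \frac{2^{kq}}{\psi_k^{q/p}}<\infty \qquad\text{and}\qquad \sum_k |I_k|\,\psi_k<\infty.
\end{equation*}
This is the heart of the matter, and it is a well-known ``interpolation of a divergent/convergent pair'' device. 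The natural choice is to balance the two competing demands geometrically: take $\psi_k$ comparable to $2^{kp}\,|I_k|^{-p/q}\,\varepsilon_k$ for a slowly varying gain factor $\varepsilon_k$, since with the pure balance $\psi_k = 2^{kp}|I_k|^{-p/q}$ both sums become exactly $\sum_k 2^{kr}|I_k|^{r/p}$ after substituting and using $1/p+1/q=1/r$ (so $q/p\cdot p/q=1$ and the exponents align). That pure choice gives convergence of each sum up to a constant, but I must also arrange that the resulting $\psi_k$ is \emph{monotone in $k$}, which the raw formula need not be.

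\smallskip

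The third step, and the main obstacle, is precisely this monotonicity: $\Psi$ must be nondecreasing, whereas $|I_k|$ can oscillate wildly, so $2^{kp}|I_k|^{-p/q}$ may fail to increase. The standard fix is to replace the raw sequence by its increasing majorant (or minorant) obtained by a summation/maximization over tails, at the cost of the slowly varying factor $\varepsilon_k$ that I insert to keep both series summable after regularization. Concretely, I would define $\psi_k$ by an expression such as $\psi_k=\max_{j\le k} \big(2^{jp}|I_j|^{-p/q}\big)$ combined with a convergence-factor argument (e.g. weighting by a summable sequence like $k^{-2}$ or using a Cauchy-condensation/Abel-type rearrangement), then verify both displayed sums remain finite; this kind of construction is classical when proving that a convergent integral admits a ``better than convergent'' weight, in the spirit of Welland's and de la Vall\'ee Poussin's lemmas. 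Finally I would extend $\psi_k$ to a function $\Psi(t)$ on all of $(0,\infty)$ by making it nondecreasing and constant-comparable on each dyadic block $[2^k,2^{k+1})$, check that the integral form of (3) follows from the series bound via the comparison in $(8)$, and confirm (10) via $(9)$, completing the argument.
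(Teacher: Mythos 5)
Your reduction to the discrete problem is sound (indeed, by the computation in (6), $\sum_k 2^{kr}|I_k|^{r/p}$ controls the Lorentz integral and is in turn controlled by it via Abel summation), but the construction you build on it has two defects, one computational and one structural. Computationally, the ``pure balance'' is not $\psi_k=2^{kp}|I_k|^{-p/q}$: with that choice the first sum collapses to $\sum_k 2^{kq}\psi_k^{-q/p}=\sum_k |I_k|=|\{f>0\}|$, which is typically infinite, and the second sum becomes $\sum_k 2^{kp}|I_k|^{1-p/q}$, which is not controlled by the hypothesis either. Equating the two generic terms and using $1+q/p=q/r$ gives the correct balance $\psi_k=2^{kr}|I_k|^{-r/q}$, for which both sums do equal $\sum_k 2^{kr}|I_k|^{r/p}$.

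The structural defect is the one you yourself flag as the main obstacle, and your proposed fix fails. The balanced sequence spikes exactly where $|I_k|$ is tiny (and is undefined, or $+\infty$, where $I_k=\emptyset$), and a running maximum propagates those spikes to later indices where $|I_k|$ is large, destroying the second sum. Concretely, with $r=1$, $p=q=2$, take $|I_{-2m}|=2^{-100m}$ and $|I_{-2m-1}|=1$ for $m\ge 1$, all other levels empty; then $\sum_k 2^{k}|I_k|^{1/2}<\infty$, so $f\in L(2,1)$, but the raw value at $k=-2m$ is $2^{-2m}|I_{-2m}|^{-1/2}=2^{48m}$, hence the monotone majorant satisfies $\tilde\psi_{-2m-1}\ge 2^{48(m+1)}$ and $\sum_k |I_k|\tilde\psi_k\ge\sum_m 2^{48(m+1)}=\infty$. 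No summable weight such as $k^{-2}$ can compensate an exponentially large overshoot, so the ``convergence factor'' idea cannot rescue this. The missing idea is to balance against the cumulative tails rather than the individual level sets: with $d_k=|\{f>2^k\}|=\sum_{j\ge k}|I_j|$, set $\psi_k=2^{kr}d_k^{\,r/p-1}$. This is automatically nondecreasing in $k$ (a product of two increasing factors, since $r/p-1<0$), the first sum becomes $\sum_k 2^{kr}d_k^{\,r/p}$, and the second is at most $(p/r)\sum_k 2^{kr}\big(d_k^{\,r/p}-d_{k+1}^{\,r/p}\big)$; both are comparable to the Lorentz integral. This is precisely the discrete shadow of the paper's proof, which regularizes $f$ to a strictly decreasing, invertible $g$ (adding an auxiliary $f_0$ so that no level is empty) and defines $\Psi$ implicitly by $\Psi(g(t))=g(t)^r\,t^{r/p-1}$, that is, $\Psi(s)=s^r\big(g^{-1}(s)\big)^{r/p-1}$; there monotonicity is automatic for the same structural reason, with $g^{-1}(2^k)$ playing the role of $d_k$, and condition (3) is verified by the substitution $t=g(u)$ and integration by parts rather than by series manipulation.
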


\begin{proof}
Let 
 $f_0(t)$ be a  strictly positive, strictly decreasing function on $(0,\infty)$ such that
\[ \lim_{t\to 0^+} f_0(t)=\infty,\qquad  \lim_{ t\to\infty} f_0(t)= 0,\]
and
\[\int_0^\infty f_0(t)^r \, t^{r/p-1}\, dt< \int_0^\infty f(t)^r \, t^{r/p-1}\, dt\,. 
\]

Let now $g_0=f+f_0$. Then, 
\begin{equation}
 f(t)< g_0(t),\quad {\text{ all }}t,
\end{equation}
and 
\begin{equation}
\int_0^\infty g_0(t)^r\, t^{r/p-1}\,dt\le {\text {max }}(2,2^{r})  \int_0^\infty f(t)^r\,t^{r/p-1}\,dt <\infty\,.
\end{equation}

Finally, we define the function $g(t)$. Let $J_k$ be the interval where $2^k<g_0(t)\le 2^{k+1}$, and $[a_k,b_k]$ its closure. Then $g(t)$ is defined by
\begin{equation*}
g(a_k)= 2^{k+1},\qquad  g(b_k)= 2^k 
 \end{equation*} 
 and extended  linearly on $[a_k,b_k]$. It  follows that
 $g(t)$ is strictly decreasing, continuous on $(0,\infty)$,  absolutely continuous, invertible,  and, 
 \begin{equation}
 g_0(t)/2<g(t)< 2\, g_0(t)\,.
 \end{equation}
Furthermore, since $g(t)$ is  decreasing and $r<p$ it follows that
\[   g(\varepsilon)^r\, \varepsilon^{r/p}\le\int_0^\varepsilon  g(t)^r\, t^{r/p-1}\, dt,\quad \varepsilon>0,
\]
and, consequently,
\begin{equation}\lim_{\varepsilon\to 0^+}  g(\varepsilon)^r\, \varepsilon^{r/p}=0.
\end{equation}
 
Likewise, for large $N$,  we have 
\begin{equation*}
g(N)^r\, N^{r/p}\le c\,\int_{N/2} ^N g(t)^r\,t^{r/p-1} dt,
\end{equation*}
and, consequently,  since  $\int_0^\infty  g(t)^r\, t^{r/p-1}dt<\infty$,
\begin{equation}\lim_{N\to\infty} g(N)^r\, N^{r/p}=0.
\end{equation}

 Let now $\Psi(t)$ be defined by the equation
 \begin{equation}
\Psi\big( g(t)\big) =g(t)^r\,t^{r/p-1},
 \end{equation}
and let $\varphi(t)$ be given by 
 \begin{equation}
 \Psi(t)= t^r \varphi(t). 
 \end{equation}

From (16) and (17) it follows  that
\begin{equation} \varphi\big( g(t)\big)= t^{r/p-1}.
\end{equation}
This gives that $\varphi(t)$ increasing, and, consequently, $\Psi(t)$, and $\Psi(t)/t^r$, are increasing. 

Next we verify that $\Psi$ satisfies (3). Since  $\Psi(t)=t^r \varphi(t)$ and $r/p+r/q=1$, by (18) it follows that
\begin{equation} \int_0^\infty \frac{t^{q-1}}{\Psi(t)^{q/p}}\,dt = \int_0^\infty 
\frac{t^{q-1} t^{r-q}} {\varphi(t)^{q/p}}\,dt
=\int_0^\infty t^{r-1}  \varphi(t)^{1-q/r}\,dt\,.
\end{equation}

By the substitution $t=g(u)$, since $r/p -1 = -r/q$ and $1-q/r=-q/p$, the right-hand side of (19) becomes
\begin{equation} -\int_0^\infty g(u)^{r-1} \,\big( u^{r/p-1}\big)^{1-q/r} \, g'(u)\, du=-\int_0^\infty g(u)^{r-1}\,\, g'(u)\, u^{r/p}\,du\,.
\end{equation}

Now, on account of (14) and (15),  integration by parts gives that (20) evaluates to
\[ c \int_0^\infty g(u)^{r}\, u^{r/p-1}\,du, 
\]
which by (12) and (13) is finite, and (3) holds.

Moreover, by (16) it follows that
\[\int_0^\infty \Psi\big(g(u)\big)\, du<\infty,
\]
and, consequently, by (9) and (11), 
\begin{equation} \int_0^\infty \Psi\big(f(u)/2\big)\, du\le \int_0^\infty \Psi\big(g_0(u)/2\big)\, du\le \int_0^\infty \Psi\big(g(u)\big)\, du<\infty.
\end{equation}

Repeating the above argument with $2f$ replacing $f$ above, (21) becomes
\[\int_0^\infty \Psi\big(2f(u)/2\big)\, du = \int_0^\infty \Psi\big(f(u)\big)\, du<\infty ,
\]
(10) holds, and the proof is finished.\taf
\end{proof}

Since as noted in the proof $\Psi(t)/t$ increases when $r=1$,  $\Psi(t)$  can be regularized to a convex function $\Psi_0(t)$ such that $\Psi_0(t)\le \Psi(t) \le \Psi_0(2t)$, and, therefore, in this case the Orlicz type class $L_\Psi$ is essentially equivalent to an Orlicz space \cite{MJ, AT}.

\end{document}